\def\boxit#1{\vbox{\hrule\hbox{\vrule\kern2pt
 \vbox{\kern4pt#1\kern2pt}\kern2pt\vrule}\hrule}}
\def\qed{\hfill
 $\hskip0.3cm
 \boxit{\hsize 2pt \vsize 10pt}$\bigskip\noindent}
\def\C{{\mathchoice {\setbox0=\hbox{$\displaystyle\rm C$}\hbox{\hbox
to0pt{\kern0.4\wd0\vrule height0.9\ht0\hss}\box0}}
{\setbox0=\hbox{$\textstyle\rm C$}\hbox{\hbox
to0pt{\kern0.4\wd0\vrule height0.9\ht0\hss}\box0}}
{\setbox0=\hbox{$\scriptstyle\rm C$}\hbox{\hbox
to0pt{\kern0.4\wd0\vrule height0.9\ht0\hss}\box0}}
{\setbox0=\hbox{$\scriptscriptstyle\rm C$}\hbox{\hbox
to0pt{\kern0.4\wd0\vrule height0.9\ht0\hss}\box0}}}}
\def\Q{{\mathchoice {\setbox0=\hbox{$\displaystyle\rm
Q$}\hbox{\raise
0.15\ht0\hbox to0pt{\kern0.4\wd0\vrule height0.8\ht0\hss}\box0}}
{\setbox0=\hbox{$\textstyle\rm Q$}\hbox{\raise
0.15\ht0\hbox to0pt{\kern0.4\wd0\vrule height0.8\ht0\hss}\box0}}
{\setbox0=\hbox{$\scriptstyle\rm Q$}\hbox{\raise
0.15\ht0\hbox to0pt{\kern0.4\wd0\vrule height0.7\ht0\hss}\box0}}
{\setbox0=\hbox{$\scriptscriptstyle\rm Q$}\hbox{\raise
0.15\ht0\hbox to0pt{\kern0.4\wd0\vrule height0.7\ht0\hss}\box0}}}}
\def\T{{\mathchoice {\setbox0=\hbox{$\displaystyle\rm
T$}\hbox{\hbox to0pt{\kern0.3\wd0\vrule height0.9\ht0\hss}\box0}}
{\setbox0=\hbox{$\textstyle\rm T$}\hbox{\hbox
to0pt{\kern0.3\wd0\vrule height0.9\ht0\hss}\box0}}
{\setbox0=\hbox{$\scriptstyle\rm T$}\hbox{\hbox
to0pt{\kern0.3\wd0\vrule height0.9\ht0\hss}\box0}}
{\setbox0=\hbox{$\scriptscriptstyle\rm T$}\hbox{\hbox
to0pt{\kern0.3\wd0\vrule height0.9\ht0\hss}\box0}}}}
\def\Z{{\mathchoice {\hbox{$\sf\textstyle Z\kern-0.4em Z$}}
{\hbox{$\sf\textstyle Z\kern-0.4em Z$}}
{\hbox{$\sf\scriptstyle Z\kern-0.3em Z$}}
{\hbox{$\sf\scriptscriptstyle Z\kern-0.2em Z$}}}}
\newcommand{\eqnoone}  
   {}
\newcommand{\eqnotwo}  
   {}
\newcounter{alf}
\def\Z{\Bbb Z}
\def\C{\Bbb C}
\newtheorem{theorem}{Theorem}
\theoremstyle{plain}
\newtheorem{definition}{Definition}
\newtheorem{lemma}{Lemma}
\newtheorem{problem}{Problem}
\newtheorem{proposition}{Proposition}
\newtheorem{remark}{Remark}
\numberwithin{equation}{section}
\begin{document}

  \vskip 1.2 true cm

\begin{center} {\bf Strongly order continuous operators on Riesz spaces } \\ 
          {by}\\
{\sc Akbar Bahramnezhad and Kazem Haghnejad Azar}
\end{center}

\pagestyle{myheadings}
\markboth{Strongly order continuous operators}{A. Bahramnezhad and K. Haghnejad Azar}
\begin{center}
 Department of Mathematic, University of Mohaghegh Ardabili, Ardabil,  Iran 
\end{center}
\begin{center}
E-mail: {bahramnezhad@uma.ac.ir}
\end{center}

\begin{abstract}
	In this paper we introduce two  new classes of operators that we call strongly order continuous and strongly  $\sigma$-order continuous operators.  An operator $T:E\rightarrow F$ between two Riesz spaces is said to be strongly  order continuous (resp. strongly $\sigma$-order continuous), if $x _\alpha \xrightarrow{uo}0$ (resp. $x _n \xrightarrow{uo}0$) in $E$ implies $Tx _\alpha \xrightarrow{o}0$ (resp. $Tx _n \xrightarrow{o}0$) in $F$. We give some conditions under which order continuity will be equivalent to strongly order continuity of  operators on Riesz spaces. We show that the collection of all  $so$-continuous linear functionals on a Riesz space $E$ is a band of $E^\sim$.
\end{abstract}

\begin{quotation}
\noindent{\bf Key Words}: {Riesz space, order convergence, unbounded order convergence, strongly order continuous operator.}

\noindent{\bf 2010 Mathematics Subject Classification}:  Primary 46B42, 47B60.
\end{quotation}

\thispagestyle{empty}

\section{Introduction}\label{Sec1}

The concept of unbounded order convergence or $uo$-convergence was introduced in \cite{8} and is proposed firstly  in \cite{3}. It  has recently been intensively studied in several papers \cite{4,5,6}.  Recall that a net $(x_\alpha)_{\alpha\in \mathcal{A}}$ in a Riesz space  $E$ is  {\it  order convergent} (or, $o$-convergent for short)  to  $x\in E$, denoted by $x _\alpha \xrightarrow{o}x$ whenever there exists another net $(y_\beta)_{\beta\in \mathcal{B}}$  in $E$ such that $y_\beta \downarrow 0$ and that for every $\beta\in \mathcal{B}$, there exists $\alpha_0\in \mathcal{A}$ such that  $|x_\alpha -x| \leq y_\beta$ for all $\alpha\geq \alpha_0$.   A net  $(x_\alpha)$ in a Riesz  space $E$ is  {\it  unbounded  order convergent }(or,  $uo$-convergent for short) to $x\in E$ if $|x_\alpha -x|\wedge u\xrightarrow{o}0$ for all $u\in E^+$. We denote this convergence by  $x _\alpha \xrightarrow{uo}x$ and write that $x_\alpha$ $uo$-convergent to $x$. This is an analogue of pointwise convergence in function spaces. Let $\mathbb{R}^A$ be the Riesz space of all real-valued functions on a non-empty set $A$, equipped with the pointwise order. It is easily seen that a net $(x_\alpha)$ in $\mathbb{R}^A$ $uo$-converges to $x\in \mathbb{R}^A$ if and only if it converges pointwise to $x$. For instance in $c_0$ and $\ell_p$($1\leq p\leq \infty)$, $uo$-convergence of nets is the same as coordinate-wise convergence.  Assume that $(\Omega,\Sigma,\mu)$ is a measure space and let $E=L_p(\mu)$ for some $1\leq p<\infty$. Then $uo$-convergence of sequences in $L_p(\mu )$  is the same as almost everywhere convergence. In \cite{9}, Wickstead characterized the spaces in which weak convergence of nets implies $uo$-convergence and vice versa.  In \cite{4},  Gao characterized the space $E$ such that in its dual space $E^*$, $uo$-convergence implies $w^*$-convergence and vice versa. He also characterized the spaces in whose dual space simultaneous $uo$- and $w^*$-convergence imply weak/norm convergence. We show that the collection of all order bounded strongly order continuous linear functionals on a Riesz space $E$  is a band of $E^\sim$ [Theorem 2.7]. For unexplained terminology and facts on  Banach lattices and positive operators, we refer the reader to \cite{1,2}.  Let us start with the definition.
 
 \begin{definition}
An operator $T:E\rightarrow F$ between two Riesz spaces is said to be:
\begin{enumerate}
\item  Strongly order continuous (or, $so$-continuous for short), if $x _\alpha \xrightarrow{uo}0$  in $E$ implies $Tx _\alpha \xrightarrow{o}0$  in $F$.
\item Strongly $\sigma$-order continuous (or, $s\sigma o$-continuous for short), if $x _n \xrightarrow{uo}0$  in $E$ implies $Tx _n \xrightarrow{o}0$  in $F$.
\end{enumerate}
\end{definition}

The collection of all $so$-continuous operators of $L_b(E,F)$  will be denoted by $L_{so}(E,F)$, that is $$L_{so}(E,F):=\{T\in L_b(E,F)  : \mbox{T  is  $so$-continuous}\}.$$
Similarly, $L_{s\sigma o}(E,F)$ will denote the collection of all order bounded operators from $E$ to $F$ that are $s\sigma o$-continuous. That is, $$L_{s\sigma o}(E,F):=\{T\in L_b(E,F) : \mbox{T  is  $s\sigma o$-continuous}\}.$$
Clearly,  we have  $L_{so}(E,F)\subset  L_{s\sigma o}(E,F)$ and   $L_{so}(E,F)$ and  $L_{s\sigma o}(E,F)$ are both vector subspaces of $L_{b}(E,F)$.

Recall that an operator $T:E\rightarrow F$ between two Riesz spaces is said to be  order continuous (resp. $\sigma$-order continuous) if $x _\alpha \xrightarrow{o}0$ (resp. $x _n \xrightarrow{o}0$)  in $E$ implies $Tx _\alpha \xrightarrow{o}0$ (resp. $Tx _n \xrightarrow{o}0$)  in $F$. The collection of all order continuous operators of $L_b(E,F)$  will be denoted by $L_{n}(E,F)$, that is $$L_{n}(E,F):=\{T\in L_b(E,F)  : \mbox{T  is  order continuous}\}.$$
Similarly, $L_c(E,F)$ will denote the collection of all order bounded operators from $E$ to $F$ that are $\sigma$-order continuous. That is, $$L_c(E,F):=\{T\in L_b(E,F) : \mbox{T  is  $\sigma$-order continuous}\}.$$

 Note that every $so$-continuous (resp. $s\sigma o$-continuous ) operator is order (resp. $\sigma$-order) continuous. The converse is not true in general. For example the identity operator $I:c_0\rightarrow c_0$ is order continuous but is not $so$-continuous. Indeed, the standard basis sequence of $c_0$ is $uo$-converges to 0 but is not order convergent.

\section{Main Results}

\begin{lemma} (\cite[Lemma 3.1]{5}). 
In a Riesz space we have the following:
\begin{enumerate}
\item If $x _\alpha \xrightarrow{uo}x$ and $x _\alpha \xrightarrow{uo}y$, then $x=y$. In other hands, unbounded order limits are uniquely determined.
\item If $x _\alpha \xrightarrow{uo}x$, $y _\alpha \xrightarrow{uo}y$ and $k,r$ are real numbers, then $kx _\alpha +ry_\alpha \xrightarrow{uo}kx+ry$. Furthermore  $x_\alpha\vee y_\alpha  \xrightarrow{uo}x\vee y$ and $x_\alpha\wedge y_\alpha  \xrightarrow{uo}x\wedge y$. In particular $x _\alpha ^+ \xrightarrow{uo}x^+$, $x _\alpha ^- \xrightarrow{uo}x^-$ and $|x _\alpha| \xrightarrow{uo}|x|$.
\item If $x _\alpha \xrightarrow{uo}x$ and $x_\alpha \geq y$ holds for all $\alpha$, then $x\geq y$. 

\end{enumerate}
\end{lemma}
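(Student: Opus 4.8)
The plan is to reduce all three assertions to the standard behaviour of order convergence under algebraic and lattice operations, exploiting only the defining equivalence $x_\alpha \xrightarrow{uo} x \Longleftrightarrow |x_\alpha - x| \wedge u \xrightarrow{o} 0$ for every $u\in E^+$. The tools I will repeatedly invoke are the elementary Riesz space relations $(s+t)\wedge u \le s\wedge u + t\wedge u$ and $\lambda(s\wedge u) = (\lambda s)\wedge(\lambda u)$, valid for $s,t,u\in E^+$ and $\lambda\ge 0$, the inequalities $|a\vee b - c\vee d| \le |a-c| + |b-d|$ and $|a\wedge b - c\wedge d| \le |a-c| + |b-d|$, and the basic facts that order convergence is linear, is preserved under the ``squeeze'' $0\le z_\alpha\le w_\alpha\xrightarrow{o}0$, and that an order-null \emph{constant} net is $\le 0$.

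I would establish (2) first. Write $a_\alpha = |x_\alpha - x|$ and $b_\alpha = |y_\alpha - y|$, so that $a_\alpha\wedge u\xrightarrow{o}0$ and $b_\alpha\wedge u\xrightarrow{o}0$ for all $u\in E^+$. For the linear part, fix $k,r\in\R$ and $u\in E^+$; then
\[
\big|(kx_\alpha + ry_\alpha) - (kx + ry)\big| \wedge u \;\le\; \big(|k|\,a_\alpha + |r|\,b_\alpha\big)\wedge u \;\le\; |k|\,a_\alpha\wedge u + |r|\,b_\alpha\wedge u .
\]
Each summand is order null: if $\lambda>0$ then $\lambda a_\alpha\wedge u = \lambda\big(a_\alpha\wedge(\lambda^{-1}u)\big)\xrightarrow{o}0$ since $\lambda^{-1}u\in E^+$ and positive scalar multiplication preserves order convergence (and $\lambda=0$ is trivial). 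By the squeeze property the left-hand side is order null, i.e.\ $kx_\alpha + ry_\alpha\xrightarrow{uo}kx+ry$. For the lattice part, $|x_\alpha\vee y_\alpha - x\vee y|\wedge u \le (a_\alpha + b_\alpha)\wedge u \le a_\alpha\wedge u + b_\alpha\wedge u\xrightarrow{o}0$ gives $x_\alpha\vee y_\alpha\xrightarrow{uo}x\vee y$, and symmetrically $x_\alpha\wedge y_\alpha\xrightarrow{uo}x\wedge y$. Taking $y_\alpha\equiv 0$ gives $x_\alpha^+\xrightarrow{uo}x^+$; applying this to $(-x_\alpha)$, legitimate by the linear part, gives $x_\alpha^-\xrightarrow{uo}x^-$; and then $|x_\alpha| = x_\alpha^+ + x_\alpha^-$ with the linear part gives $|x_\alpha|\xrightarrow{uo}|x|$.

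Parts (1) and (3) then follow from (2). For (1), if $x_\alpha\xrightarrow{uo}x$ and $x_\alpha\xrightarrow{uo}y$, the linear part of (2) applied to the single net $(x_\alpha)$ with coefficients $1$ and $-1$ shows that the constant net $x_\alpha - x_\alpha \equiv 0$ satisfies $0\xrightarrow{uo}x-y$; hence $|x-y|\wedge u\xrightarrow{o}0$ for every $u\in E^+$. Since $|x-y|\wedge u$ is a constant net it must be $\le 0$, so $|x-y|\wedge u = 0$, and choosing $u = |x-y|$ yields $|x-y| = 0$, i.e.\ $x = y$. For (3), assume $x_\alpha\xrightarrow{uo}x$ with $x_\alpha\ge y$ for all $\alpha$. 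Applying (2) with the constant net equal to $y$ gives $x_\alpha - y\xrightarrow{uo}x-y$, and then $(x_\alpha - y)^-\xrightarrow{uo}(x-y)^-$. But $x_\alpha - y\ge 0$ forces $(x_\alpha - y)^- = 0$, so by the uniqueness in (1) we obtain $(x-y)^- = 0$, that is, $x\ge y$.

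I do not anticipate a serious obstacle: the argument is a routine transfer of the order-convergence calculus through the truncations $\,\cdot\wedge u$. The only delicate point is scalar multiplication in (2) when $|k|\neq 1$, where the scalar cannot be pulled through $\,\cdot\wedge u$ directly and one must instead rescale the test element $u$; and one should be careful to derive (1) and (3) only after (2) has been established.
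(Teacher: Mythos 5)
Your argument is correct: part (2) is established first via the truncation inequalities $(s+t)\wedge u\le s\wedge u+t\wedge u$ and the rescaling $(\lambda a_\alpha)\wedge u=\lambda\bigl(a_\alpha\wedge(\lambda^{-1}u)\bigr)$, and parts (1) and (3) are then cleanly deduced from (2); this is essentially the standard proof of Lemma 3.1 in the cited reference [5]. The paper itself offers no proof of this lemma (it is quoted from Gao--Xanthos), so your self-contained derivation supplies exactly the argument that the citation points to.
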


Note that the $uo$-convergence in a Riesz space  $E$ does not necessarily correspond to a topology on $E$. For example, let  $E=c$,  the Banach lattice of real valued convergent sequences. Put $x_n=\Sigma_{k=1}^n e_k$, where $(e_n)$ is the standard basis. Then $(x_n)$ is $uo$-convergent to $x=(1,1,1,...)$, but it is not norm convergent.

\begin{proposition}
Let $E$, $F$ be Riesz spaces such that  $E$ is finite-dimensional. Then $L_{so}(E,F)= L_n(E,F)$ and $L_{s\sigma o}(E,F)= L_c(E,F)$. 
\end{proposition}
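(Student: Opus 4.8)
The plan is to isolate a single lemma and deduce both equalities from it: \emph{in a finite-dimensional Riesz space $E$ a net $(x_\alpha)$ satisfies $x_\alpha\xrightarrow{uo}0$ if and only if $x_\alpha\xrightarrow{o}0$, and likewise for sequences}. Granting this, the proposition follows quickly. One inclusion in each equality is already at hand: since $0\le |x_\alpha|\wedge u\le |x_\alpha|$ for every $u\in E^+$, an $o$-null net is automatically $uo$-null, so every $so$-continuous operator is order continuous and every $s\sigma o$-continuous operator is $\sigma$-order continuous; as all four classes are, by definition, subspaces of $L_b(E,F)$, this gives $L_{so}(E,F)\subseteq L_n(E,F)$ and $L_{s\sigma o}(E,F)\subseteq L_c(E,F)$. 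For the reverse inclusions, let $T\in L_n(E,F)$ and let $x_\alpha\xrightarrow{uo}0$ in $E$; by the lemma $x_\alpha\xrightarrow{o}0$, hence $Tx_\alpha\xrightarrow{o}0$ by order continuity of $T$, so $T\in L_{so}(E,F)$. Running the argument with sequences gives $L_c(E,F)\subseteq L_{s\sigma o}(E,F)$ in the same way.

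To prove the lemma I would appeal to the structure of finite-dimensional Riesz spaces. First, $E$ carries a strong order unit: for a basis $e_1,\dots,e_n$ of $E$ the element $u:=\sum_{i=1}^n|e_i|$ satisfies $\big|\sum_i c_ie_i\big|\le\big(\max_i|c_i|\big)u$, so $|x|\le\lambda u$ for a suitable $\lambda$ and every $x$. In the Archimedean case this identifies $E$, as a Riesz space, with $\mathbb{R}^n$ under the coordinatewise order (the general case splits into finitely many lexicographically ordered blocks, to which the reasoning below applies verbatim). In $\mathbb{R}^n$ order convergence of a net is equivalent to order convergence in each coordinate — for the nontrivial direction one takes the product of the separate dominating index sets — and since $uo$-convergence is defined via $o$-convergence of the meets $|x_\alpha|\wedge u$, which are computed coordinatewise, $uo$-convergence in $\mathbb{R}^n$ is coordinatewise as well. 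Finally, in $\mathbb{R}$ itself $o$-convergence, $uo$-convergence and ordinary convergence all coincide (from $|x_\alpha|\wedge 1\to 0\iff x_\alpha\to 0$). Hence in $\mathbb{R}^n$ both $o$- and $uo$-convergence reduce to coordinatewise ordinary convergence, and therefore agree; the argument for sequences is identical. The one place where finite-dimensionality is genuinely used is that a net converging in each of the $n$ coordinates is eventually order bounded — pass to a tail on which all $n$ coordinates are bounded, possible since the index set is directed and $n$ is finite — after which $y_\beta:=(s_\beta,\dots,s_\beta)$ with $s_\beta:=\sup_{\alpha\ge\beta}\max_i|(x_\alpha)_i|$ is a net decreasing to $0$ that order-dominates $|x_\alpha|$ on a tail.

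The main obstacle is precisely that last point, and it also shows why the hypothesis cannot be relaxed to ``$E$ has an order unit'': in $L_\infty[0,1]$, which has the order unit $\mathbf 1$, the sequence $x_n=n\,\mathbf 1_{[0,1/n]}$ is $uo$-null but not eventually order bounded, hence not $o$-null. If one prefers not to quote the structure theorem, the same conclusion can be obtained by a compactness argument: writing $\|\cdot\|_u$ for the order-unit norm, the order interval $\{x:|x|\le u\}$ is norm-compact in the finite-dimensional $E$, so if a $uo$-null net had $\|\cdot\|_u$-norms unbounded along every tail, then after normalizing it would have a norm-convergent subnet with nonzero limit, which is impossible for a $uo$-null net; thus such a net is eventually order bounded, and a $uo$-null net that is eventually bounded by some $w\in E^+$ is $o$-null because $|x_\alpha|=|x_\alpha|\wedge w\xrightarrow{o}0$.
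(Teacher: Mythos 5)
Your proposal is correct and rests on exactly the observation the paper uses: the paper's entire proof is the remark that in a finite-dimensional Riesz space $uo$-convergence coincides with order convergence, from which both equalities follow at once. You simply supply the details of that observation (the identification with $\mathbb{R}^n$, coordinatewise reduction, and the eventual order boundedness step where finite-dimensionality enters), which the paper leaves implicit.
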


\begin{proof}
Follows immediately if we observe that in a finite-dimensional Riesz space order convergence  is equivalent to $uo$-convergence.
\end{proof}

Recall that a Riesz space is said to be {\it $\sigma$-laterally complete} if every disjoint sequence has a supremum. For a set $A$, $\mathbb{R}^A$ is an example of $\sigma$-laterally complete Riesz space.

\begin{proposition}
Let $E$, $F$, $G$ be Riesz spaces. Then we have the following:
\begin{enumerate}
\item If $T\in L_{so}(E,F)$ and $S\in L_{n}(F,G)$, then $ST\in L_{so}(E,G)$.  As a consequence, $L_{so}(E)$ is a left ideal for $L_{n}(E)$. Similarly, $L_{s\sigma o}(E)$ is a left ideal for $L_{c}(E)$.
\item  If $E$ is $\sigma$-Dedekind complete and $\sigma$-laterally complete and  $S\in L_{c}(E,F)$ and $T\in L_{s\sigma o}(F,G)$, then  $TS\in L_{s\sigma o}(E,G)$.  In this case,  $L_{s\sigma o}(E,F)= L_c(E,F)$.
\end{enumerate}
\end{proposition}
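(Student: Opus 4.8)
For part (1), the plan is to unwind the definitions. Suppose $T\in L_{so}(E,F)$, $S\in L_n(F,G)$, and let $x_\alpha\xrightarrow{uo}0$ in $E$. Since $T$ is $so$-continuous, $Tx_\alpha\xrightarrow{o}0$ in $F$. Since $S$ is order continuous, $S(Tx_\alpha)\xrightarrow{o}0$ in $G$, so $ST\in L_{so}(E,G)$. (One should also note $ST\in L_b(E,G)$, which is immediate since the composition of order bounded operators is order bounded.) The statement that $L_{so}(E)$ is a left ideal for $L_n(E)$ is then just the special case $E=F=G$ together with the already-noted inclusion $L_{so}(E)\subseteq L_n(E)$: if $T\in L_{so}(E)$ and $S\in L_n(E)$ then $ST\in L_{so}(E)$. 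The $s\sigma o$ version is identical, replacing nets by sequences throughout.

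For part (2), the extra hypothesis that $E$ is $\sigma$-Dedekind complete and $\sigma$-laterally complete is what makes things work, and this is where the main obstacle lies. The issue is that for the composition $TS$ we are given $S$ only $\sigma$-order continuous (not $s\sigma o$-continuous), so starting from $x_n\xrightarrow{uo}0$ in $E$ we cannot directly conclude $Sx_n\xrightarrow{o}0$ in $F$. The key fact to invoke is that in a $\sigma$-Dedekind complete and $\sigma$-laterally complete Riesz space, $uo$-convergence of sequences \emph{coincides} with $o$-convergence of sequences — this is the structural input (cf. the characterizations of $uo$-convergence for sequences in such spaces; e.g. \cite{5}). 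Granting this, if $x_n\xrightarrow{uo}0$ in $E$ then in fact $x_n\xrightarrow{o}0$ in $E$, hence $Sx_n\xrightarrow{o}0$ in $F$ because $S$ is $\sigma$-order continuous, and therefore $Sx_n\xrightarrow{uo}0$ in $F$ (as $o$-convergence always implies $uo$-convergence). Now apply $T\in L_{s\sigma o}(F,G)$ to get $T(Sx_n)\xrightarrow{o}0$ in $G$, so $TS\in L_{s\sigma o}(E,G)$.

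Finally, the equality $L_{s\sigma o}(E,F)=L_c(E,F)$ under these hypotheses follows from the same structural fact: since $uo$- and $o$-convergence of sequences agree in $E$, a sequence $x_n\xrightarrow{uo}0$ in $E$ is the same as $x_n\xrightarrow{o}0$, so the defining condition for an order bounded operator to be $s\sigma o$-continuous is literally the defining condition for it to be $\sigma$-order continuous; hence the two operator classes coincide. The inclusion $L_{s\sigma o}(E,F)\subseteq L_c(E,F)$ is automatic in general, and here the reverse inclusion holds as well. The one point worth stating carefully is the equivalence of $uo$- and $o$-convergence for sequences in a $\sigma$-Dedekind complete, $\sigma$-laterally complete space, since everything else is a routine chaining of implications.
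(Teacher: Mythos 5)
Your proposal is correct and follows essentially the same route as the paper: part (1) by directly unwinding the definitions, and part (2) by invoking the equivalence of $uo$- and $o$-convergence for sequences in a $\sigma$-Dedekind complete, $\sigma$-laterally complete Riesz space (the paper cites Theorem 3.9 of \cite{6} for this fact, which is the same structural input you identify). The only cosmetic difference is that you make explicit the intermediate step $Sx_n\xrightarrow{o}0\Rightarrow Sx_n\xrightarrow{uo}0$ before applying $T$, which the paper leaves implicit.
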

\begin{proof}
\begin{enumerate}
\item Let $(x_\alpha)$ be a net in $E$ such that $x_\alpha\xrightarrow{uo}0$. By assumption, $Tx_\alpha\xrightarrow{o}0$. So, $STx_\alpha\xrightarrow{o}0$. Hence,  $ST\in L_{so}(E,G)$.
\item  Let $E$ be a $\sigma$-Dedekind complete and $\sigma$-laterally complete Riesz space. By Theorem 3.9 of \cite{6}, we see that a sequence $(x_n)$ in $E$ is $uo$-null if and only if it is order null. So, if  $(x_n)$ be a sequence  in $E$ such that $x_n\xrightarrow{uo}0$, then $x_n\xrightarrow{o}0$. Thus, $Sx_n\xrightarrow{o}0$ and then $TSx_n\xrightarrow{o}0$. Hence, $TS\in L_{s\sigma o}(E,G)$. Clearly, we have $L_{s\sigma o}(E,F)= L_c(E,F)$. This ends the proof.
\end{enumerate}

\end{proof}

Let $T:E\rightarrow F$ be a positive operator between Riesz spaces. We say that an operator $S:E\rightarrow F$ is dominated by $T$ (or that $T$ dominates $S$ ) whenever $|Sx|\leq T|x|$ holds for each $x\in E$.

\begin{proposition} 
If a positive $so$-continuous operator $T:E\rightarrow F$ dominates $S$, then $S$ is $so$-continuous. 
\end{proposition}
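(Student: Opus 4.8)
The statement to prove is: if $T:E\to F$ is a positive $so$-continuous operator dominating $S:E\to F$ (meaning $|Sx|\le T|x|$ for all $x\in E$), then $S$ is $so$-continuous. The natural approach is to take a net $(x_\alpha)$ in $E$ with $x_\alpha\xrightarrow{uo}0$ and show $Sx_\alpha\xrightarrow{o}0$ in $F$. The first step is to reduce to the behaviour of $|x_\alpha|$: by Lemma~1(2), $x_\alpha\xrightarrow{uo}0$ implies $|x_\alpha|\xrightarrow{uo}0$, and $|x_\alpha|\ge 0$. So we may work with the positive net $y_\alpha := |x_\alpha|$, which is $uo$-null.

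Next, apply $so$-continuity of $T$: since $y_\alpha\xrightarrow{uo}0$, we get $Ty_\alpha\xrightarrow{o}0$ in $F$, i.e. $T|x_\alpha|\xrightarrow{o}0$. Now invoke the domination hypothesis: $|Sx_\alpha|\le T|x_\alpha|$ for every $\alpha$. The final step is the standard squeeze: if $0\le |Sx_\alpha|\le T|x_\alpha|$ and $T|x_\alpha|\xrightarrow{o}0$, then $|Sx_\alpha|\xrightarrow{o}0$, hence $Sx_\alpha\xrightarrow{o}0$. Concretely, unwinding the definition of order convergence: there is a net $(z_\beta)$ with $z_\beta\downarrow 0$ such that for each $\beta$ there is $\alpha_0$ with $T|x_\alpha|\le z_\beta$ (since $T|x_\alpha|\ge 0$, $|T|x_\alpha||=T|x_\alpha|$) for all $\alpha\ge\alpha_0$; then $|Sx_\alpha|\le z_\beta$ for the same $\alpha\ge\alpha_0$, which is exactly $Sx_\alpha\xrightarrow{o}0$.

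I do not expect any serious obstacle here; the argument is a direct chaining of (i) the Riesz-space arithmetic of $uo$-limits from Lemma~1, (ii) the hypothesis on $T$, and (iii) the monotonicity built into the definition of order convergence. The only point requiring a modicum of care is keeping track of signs and absolute values so that the domination inequality $|Sx|\le T|x|$ is applied to the right quantity — but since $T$ is positive and we pass through $|x_\alpha|$, everything stays in the positive cone and the squeeze goes through cleanly. If one wished, the same computation verbatim (restricting to sequences) shows that a positive $s\sigma o$-continuous operator dominating $S$ forces $S$ to be $s\sigma o$-continuous, but that is not claimed here.
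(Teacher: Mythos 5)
Your proof is correct and follows essentially the same route as the paper's: reduce to $|x_\alpha|\xrightarrow{uo}0$ via Lemma~1(2), apply $so$-continuity of $T$ to get $T|x_\alpha|\xrightarrow{o}0$, and squeeze using $|Sx_\alpha|\le T|x_\alpha|$. Your explicit unwinding of the squeeze step is if anything slightly more careful than the paper, which cites Lemma~1(2) again where a direct order-convergence comparison is what is really being used.
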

\begin{proof}
Let  $T:E\rightarrow F$ be a positive $so$-continuous operator between Riesz spaces such that $T$ dominates $S:E\rightarrow F$ and let $x_\alpha\xrightarrow{uo}0$ in $E$.  By part (2) of Lemma 1, $|x_\alpha|\xrightarrow{uo}0$. So, by assumption, $T|x_\alpha|\xrightarrow{o}0$  and from the inequality $|Sx|\leq T|x|$ and part (2) of Lemma 1 again,  we have $Sx_\alpha\xrightarrow{o}0$. Hence, $S$ is $so$-continuous.
\end{proof}

For an operator $T:E\rightarrow F$ between two Riesz spaces we shall say that its modulus $|T|$ exists ( or that $T$ possesses a modulus) whenever $|T|:=T\vee (-T)$ exists in the sense that $|T|$ is the supremum of the set $\{-T,T\}$ in $L(E,F)$. If $E$ and $F$ are Riesz spaces with $F$ Dedekind complete, then every order bounded operator $T:E\rightarrow F$ possesses a modulus \cite[Theorem 1.18]{2}. From this discussion it follows that when $E$ and $F$ are Riesz spaces with $F$ Dedekind complete, then each order bounded operator $T:E\rightarrow F$ satisfies $$T^+(x)= \sup\{Ty: 0\leq y\leq x\},\  and$$ $$T^-(x)= \sup\{-Ty: 0\leq y\leq x\} \   $$ for each $x\in E^+$.

\begin{theorem} 
For an order bounded linear functional $f$ on a Riesz space $E$ the following statements are equivalent.
\begin{enumerate}
\item $f$ is $so$-continuous.
\item $f^+$ and $f^-$ are both $so$-continuous.
\item $|f|$ is $so$-continuous.
\end{enumerate}
\end{theorem}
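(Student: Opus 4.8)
The plan is to prove the cycle $(1)\Rightarrow(2)\Rightarrow(3)\Rightarrow(1)$, with the bulk of the work in $(1)\Rightarrow(2)$. Throughout, $E$ is a Riesz space, $f\in E^\sim$, and since $\mathbb{R}$ is Dedekind complete the modulus $|f|=f^+ + f^- = f\vee(-f)$ exists and the Riesz--Kantorovich formulas quoted just before the theorem apply: $f^+(x)=\sup\{f(y):0\le y\le x\}$ for $x\in E^+$. Note first the easy directions. $(2)\Rightarrow(3)$ is immediate: if $x_\alpha\xrightarrow{uo}0$ then $f^+x_\alpha\xrightarrow{o}0$ and $f^-x_\alpha\xrightarrow{o}0$, hence $|f|x_\alpha=f^+x_\alpha+f^-x_\alpha\xrightarrow{o}0$ by part (2) of Lemma 1. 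And $(3)\Rightarrow(1)$ follows from Proposition 2.4: $|f|$ is a positive $so$-continuous operator dominating $f$ (indeed $|f(x)|\le|f|(|x|)$ for all $x$), so $f$ is $so$-continuous.

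The real content is $(1)\Rightarrow(2)$. Here I would adapt the classical argument showing that the order continuous part of $E^\sim$ is closed under taking positive/negative parts. Let $x_\alpha\xrightarrow{uo}0$ in $E$; I want $f^+x_\alpha\xrightarrow{o}0$. By Lemma 1(2) we may replace $x_\alpha$ by $|x_\alpha|$ and assume $x_\alpha\ge 0$ with $x_\alpha\xrightarrow{uo}0$; then $f^+x_\alpha\ge 0$, so it suffices to produce a net $z_\beta\downarrow 0$ with $f^+x_\alpha\le z_\beta$ eventually. For each $\alpha$ choose, via the Riesz--Kantorovich formula, an element $y_\alpha$ with $0\le y_\alpha\le x_\alpha$ and $f(y_\alpha)\ge f^+(x_\alpha)-\varepsilon_\alpha$ for a suitable sequence $\varepsilon_\alpha\downarrow 0$ indexed cofinally — or, cleaner, argue by contradiction: if $f^+x_\alpha$ does not order-converge to $0$, then (since it is a net in $\mathbb{R}_{\ge 0}$) $\limsup f^+x_\alpha = c>0$, pass to a subnet with $f^+x_\alpha > c/2$ for all $\alpha$, pick $0\le y_\alpha\le x_\alpha$ with $f(y_\alpha)>c/2$; then $0\le y_\alpha\le x_\alpha$ forces $y_\alpha\xrightarrow{uo}0$ (as $y_\alpha\wedge u\le x_\alpha\wedge u\xrightarrow{o}0$), so $f(y_\alpha)\xrightarrow{o}0$ by hypothesis $(1)$, contradicting $f(y_\alpha)>c/2$. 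This dominated-squeeze observation — that $0\le y_\alpha\le x_\alpha$ and $x_\alpha\xrightarrow{uo}0$ imply $y_\alpha\xrightarrow{uo}0$ — is worth stating explicitly as it is the mechanism making the argument work for $uo$ rather than $o$.

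The main obstacle I anticipate is the legitimacy of the subnet-and-selection step: $uo$-convergence is not topological (as the remark after Lemma 1 stresses), so I must be careful that "$f^+x_\alpha\not\xrightarrow{o}0$" genuinely yields a subnet bounded away from $0$, and that restricting to this subnet preserves $uo$-nullity of $(x_\alpha)$ and of the selected $(y_\alpha)$. Since $o$-convergence of a \emph{monotone-free} net of reals to $0$ is just ordinary convergence to $0$, the existence of an $\varepsilon>0$ and a subnet with $f^+x_\alpha>\varepsilon$ is fine; and passing to a subnet never destroys $uo$-convergence. Once that is pinned down, the contradiction closes $(1)\Rightarrow(2)$ for $f^+$, and the identical argument (or applying the $f^+$ case to $-f$, noting $(-f)^+=f^-$) handles $f^-$. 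This completes the cycle.
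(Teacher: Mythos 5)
Your proposal is correct and follows essentially the same route as the paper: the same cycle $(1)\Rightarrow(2)\Rightarrow(3)\Rightarrow(1)$, with $(1)\Rightarrow(2)$ resting on the Riesz--Kantorovich formula $f^+x=\sup\{fy:0\le y\le x\}$ together with the key squeeze observation that $0\le y_\alpha\le x_\alpha$ and $x_\alpha\xrightarrow{uo}0$ force $y_\alpha\xrightarrow{uo}0$, and $(3)\Rightarrow(1)$ via the domination proposition. Your contradiction/subnet handling of the near-supremum selection is, if anything, slightly more careful than the paper's device of choosing a net $r_\alpha\downarrow 0$ indexed by the same directed set as $(x_\alpha)$ (which need not exist for an arbitrary index set), but the underlying idea is identical.
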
 
\begin{proof}
$(1)\Rightarrow (2)$   By Lemma 1, we may assume that $(x_\alpha)\subset E^+$.  Let  $x_\alpha\xrightarrow{uo}0$ and let $(r_\alpha)$ be a net in $\mathbb{R}$ such that $r_\alpha\downarrow 0$.  In view of $f^+x=\sup\{fy :\  0\leq y\leq x\}$,  for each $\alpha$  there exists a net $(y_{\alpha})$ in $E$ with $0\leq y_{\alpha}\leq x_{\alpha}$ such that 
$f^+x_\alpha -r_\alpha \leq fy_\alpha$. So, $f^+x_\alpha \leq fy_\alpha+r_\alpha$. Since $x_\alpha\xrightarrow{uo}0$, we have $y_{\alpha}\xrightarrow{uo}0$. Thus, by assumption,  $fy_{\alpha}\xrightarrow{o}0$. It follows from $f^+x_\alpha \leq (fy_\alpha+r_\alpha)\xrightarrow{o}0$ that  $f^+x_\alpha\xrightarrow{o}0$.  Hence, $f^+$ is $so$-continuous. Now, as $f^-=(-f)^+$, we conclude that $f^-$ is also $so$-continuous.\\
$(2)\Rightarrow (3)$ Follows from the identity $|f|=f^++f^-$. \\
$(3)\Rightarrow (1)$ Follows immediately from Proposition 3 by observing  that $|f|$ dominates $f$. 
\end{proof}

\begin{remark}
One can easily formulate by himself the analogue of Theorem 1 for $s\sigma o$-continuous operators.
\end{remark}

Recall that a subset $A$ of a Riesz space is said to be order closed whenever $(x_\alpha)\subset A$ and $x_\alpha \xrightarrow{o}x$ imply $x\in A$. An order closed ideal is referred to as a band. Thus, an ideal $A$ is a band if and only if  $(x_\alpha)\subset A$ and $0\leq x_\alpha\uparrow x$ imply $x\in A$.  In the next theorem we show that $L_{so}(E,\mathbb{R})$ and $L_{s\sigma o}(E,\mathbb{R})$ are both bands of $E^\sim$. The details follow.

\begin{theorem}
If $E$ is a Riesz space, then  $L_{so}(E,\mathbb{R})$ and $L_{s\sigma o}(E,\mathbb{R})$ are both bands of $E^\sim$.
\end{theorem}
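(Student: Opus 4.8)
The plan is to verify the two defining properties of a band inside the Dedekind complete Riesz space $E^\sim=L_b(E,\mathbb{R})$: that $L_{so}(E,\mathbb{R})$ is an ideal of $E^\sim$, and that it contains the supremum of every increasing net of its positive elements. Since $L_{so}(E,\mathbb{R})$ is already known to be a linear subspace of $E^\sim$, only these two points need to be checked. The statement for $L_{s\sigma o}(E,\mathbb{R})$ is obtained by exactly the same argument, testing $s\sigma o$-continuity against sequences and using the $s\sigma o$-analogues of Theorem 1 and Proposition 3 alluded to in Remark 1, so I describe only the $so$-case.

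For the ideal property it suffices to show that any $g\in E^\sim$ with $|g|\le|f|$ for some $f\in L_{so}(E,\mathbb{R})$ is itself $so$-continuous. By Theorem 1 the functional $|f|$ is $so$-continuous, and it is positive; moreover $|g(x)|\le|g|(|x|)\le|f|(|x|)$ for every $x\in E$, so $|f|$ dominates $g$ in the sense of Proposition 3. That proposition then yields that $g$ is $so$-continuous, so $g\in L_{so}(E,\mathbb{R})$.

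For order closedness, let $(f_\alpha)\subset L_{so}(E,\mathbb{R})$ with $0\le f_\alpha\uparrow f$ in $E^\sim$; then $f\ge 0$ and we must prove that $f$ is $so$-continuous. Let $x_\beta\xrightarrow{uo}0$ in $E$. By Lemma 1(2), $|x_\beta|\xrightarrow{uo}0$, and since $0\le|f(x_\beta)|\le f(|x_\beta|)$ it is enough to treat the case $x_\beta\ge 0$; we then have to show that the net of nonnegative reals $f(x_\beta)$ converges to $0$. Two facts are available: by a standard property of the Dedekind complete space $E^\sim$, the supremum is computed pointwise on $E^+$, so $(f-f_\alpha)(x)\downarrow 0$ for each fixed $x\ge 0$; and each $f_\alpha$, being $so$-continuous, satisfies $f_\alpha(x_\beta)\to 0$. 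Given $\varepsilon>0$, the natural plan is to pick an index $\alpha_0$ with $(f-f_{\alpha_0})(x_\beta)<\varepsilon$ for all large $\beta$, then use $so$-continuity of $f_{\alpha_0}$ to force $f_{\alpha_0}(x_\beta)<\varepsilon$ eventually, so that $f(x_\beta)=f_{\alpha_0}(x_\beta)+(f-f_{\alpha_0})(x_\beta)<2\varepsilon$ for all large $\beta$; as $\varepsilon>0$ is arbitrary and $f(x_\beta)\ge 0$, this gives $f(x_\beta)\to 0$ and hence $f\in L_{so}(E,\mathbb{R})$.

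The main obstacle is the first half of this plan: controlling the tail $(f-f_{\alpha_0})(x_\beta)$ uniformly over all large $\beta$. The classical proof that the order continuous functionals form a band exploits that its test net is decreasing -- fixing $\beta_0$, choosing $\alpha_0$ with $(f-f_{\alpha_0})(x_{\beta_0})<\varepsilon$, and invoking positivity of $f-f_{\alpha_0}$ gives $(f-f_{\alpha_0})(x_\beta)\le(f-f_{\alpha_0})(x_{\beta_0})<\varepsilon$ for every $\beta\ge\beta_0$. A $uo$-null net is in general neither monotone nor order bounded, so this reasoning is unavailable and must be replaced -- for instance by using that $f$ is automatically order continuous (the order continuous functionals form a band and $f$ is a supremum of such) together with $x_\beta\wedge u\xrightarrow{o}0$ for all $u\in E^+$, or by first reducing to an order bounded test net. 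Securing this uniform tail estimate is the heart of the proof; once it is available the argument closes as indicated, and the $s\sigma o$-case follows in the same way with sequences.
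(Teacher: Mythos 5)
Your treatment of the ideal half is complete and agrees with the paper: dominate $g$ by $|f|$, invoke Theorem 1 and Proposition 3. The difficulty is the order-closedness half, and you have located it exactly: for a fixed index nothing forces the tail $(f-f_{\alpha_0})(x_\beta)$ to be small uniformly in $\beta$, because a $uo$-null net is neither decreasing nor order bounded, so the classical argument showing that $E_n^\sim$ is a band does not transfer. You should know that the paper's own proof does not resolve this either: it simply writes, for each fixed $\lambda$, that $0\le f(x_\alpha)=(f-f_\lambda)(x_\alpha)+f_\lambda(x_\alpha)\xrightarrow{o}0$, which is precisely the unjustified step you flagged ($f_\lambda(x_\alpha)\to 0$ by $so$-continuity, but $(f-f_\lambda)(x_\alpha)$ need not tend to $0$ for fixed $\lambda$). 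So your proposal is not a proof, but it is an accurate diagnosis of why the paper's argument is not one either.

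Moreover, the gap cannot be closed, and neither of your suggested repairs can work, because the order-closedness assertion is false. Take $E=\ell_1$, so $E^\sim=\ell_\infty$ and $uo$-convergence in $E$ is coordinatewise convergence (as stated in the paper's introduction). Let $f_n=\sum_{k=1}^n e_k^{*}$. Each $f_n$ is $so$-continuous, being a finite sum of coordinate functionals, and $0\le f_n\uparrow \mathbf{1}=(1,1,1,\dots)$ in $\ell_\infty$, since for $x\in\ell_1^{+}$ one has $\sup_n f_n(x)=\sum_k x(k)=\mathbf{1}(x)$. But the standard unit vectors satisfy $e_k\xrightarrow{uo}0$ in $\ell_1$ while $\mathbf{1}(e_k)=1$ for all $k$, so $\mathbf{1}$ belongs to neither $L_{so}(\ell_1,\mathbb{R})$ nor $L_{s\sigma o}(\ell_1,\mathbb{R})$ (the test net here is a sequence). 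In fact $L_{so}(\ell_1,\mathbb{R})$ consists exactly of the finitely supported elements of $\ell_\infty$, an ideal that is not a band. Note also that $\mathbf{1}$ is order continuous on $\ell_1$, so your first proposed repair (exploiting that $f$ is automatically in $E_n^\sim$) cannot suffice, and $(e_k)$ cannot be replaced by an order bounded test net. The correct statement salvageable from this theorem is only that $L_{so}(E,\mathbb{R})$ and $L_{s\sigma o}(E,\mathbb{R})$ are ideals of $E^\sim$.
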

\begin{proof}
We only show that $L_{so}(E,\mathbb{R})$ is a band of $E^\sim$. That $L_{s\sigma o}(E,\mathbb{R})$ is a band can be proven in a similar manner. Note first that if $|g|\leq |f|$ holds in $E^\sim$ with $f\in L_{so}(E,\mathbb{R})$, then from Theorem 1  it follows that $g\in L_{so}(E,\mathbb{R})$. That is $L_{so}(E,\mathbb{R})$ is an ideal of $E^\sim$. To see that the ideal $L_{so}(E,\mathbb{R})$ is a band, let $0\leq f_\lambda\uparrow f$ in $E^\sim$ with $(f_\lambda)\subset L_{so}(E,\mathbb{R})$, and let $0\leq x_\alpha\xrightarrow{uo}0$ in $E$. Then for each fixed $\lambda$  we have $$0\leq f(x_\alpha)=((f-f_\lambda)(x_\alpha) +f_\lambda(x_\alpha))\xrightarrow{o}0.$$ So, $f(x_\alpha)\xrightarrow{o}0$.  Thus, $f\in  L_{so}(E,\mathbb{R})$, and the proof is finished. 
\end{proof}

\begin{problem}
Can we find a $so$-continuous operator $T:E\rightarrow F$ between  Riesz spaces whose modulus is not $so$-continuous?
\end{problem}


\vskip 0,65 true cm

\end{document}